\numberwithin{equation}{section}
\newcommand{\DD}{\mathbb{D}}
\newcommand{\NN}{\mathbb{N}}
\newcommand{\RR}{\mathbb{R}}
\newcommand{\TT}{\mathbb{T}}
\newcommand{\ZZ}{\mathbb{Z}}
\newcommand{\cC}{{\mathcal{C}}}
\newcommand{\cD}{{\mathcal{D}}}
\newcommand{\cZ}{{\mathcal{Z}}}
\newcommand{\cM}{{\mathcal{M}}}
\newcommand{\cP}{{\mathcal{P}}}
\newcommand{\cX}{{\mathcal{X}}}
\renewcommand{\hat}{\widehat}
\DeclareMathOperator{\Span}{Span}
\DeclareMathSymbol{\subsetneqq}{\mathbin}{AMSb}{36}
\DeclareMathOperator{\Lip}{Lip}
\newtheorem*{thm}{Theorem}
\newtheorem{theorem}{Theorem}
\newtheorem{lem}[theorem]{{\bf Lemma}}
\newtheorem*{coro}{{\bf Corollary}}
\title[Cyclicity in the harmonic Dirichlet space]
{Cyclicity in the harmonic Dirichlet space}
\subjclass[2010]{Primary 46E22; Secondary 31A05, 31A15, 31A20, 47B32}
\keywords{Harmonic Dirichlet space,  capacity, cyclic vectors}
\thanks{Research of OEF partially supported by Hassan II Academy of Science and Technology.
Research of TR supported by NSERC and the Canada Research Chairs program}
\author[Abakumov]{E. Abakumov}
\address{LAMA, UMR CNRS 8050. Universit\'e Paris--Est\\ 5 boulevard Descartes\\ Champs--sur--Marne\\77454 Marne--la--Vall\'ee cedex 2\\France}
\email{evgueni.abakoumov@u-pem.fr}
\author[El-Fallah]{O. El-Fallah}
\address{Laboratoire Analyse et Applications URAC/03,  Mohammed
V University in Rabat, B.P. 1014 Rabat, Morocco}
\email{elfallah@fsr.ac.ma}
\author[Kellay]{K. Kellay}
\address{IMB\\Universit\'e de  Bordeaux \\
351 cours de la Lib\'eration\\33405 Talence \\France}
\email{kkellay@math.u-bordeaux1.fr}
\author[Ransford]{T. Ransford}
\address{D\'epartement de math\'ematiques et de statistique, Universit\'e Laval\\  
1045 avenue de la M\'edecine, Qu\'ebec (QC), Canada G1V 0A6}
\email{ransford@mat.ulaval.ca}
\begin{document}

\begin{abstract} 
The harmonic Dirichlet space is the Hilbert space of functions $f\in L^2(\TT)$ such that
$$
\|f\|_{\cD(\TT)}^2:=\sum_{n\in\ZZ}(1+|n|)|\hat{f}(n)|^2<\infty.
$$
We give sufficient conditions for $f$  to be cyclic in $\cD (\TT)$, 
in other words, for $\{\zeta ^nf(\zeta):\ n\geq 0\}$ 
to span a dense subspace of $\cD(\TT)$.
\end{abstract}

\maketitle


\section{Introduction}

Let $\cX$ be a topological linear space of complex functions on the unit circle $\TT$ 
such that the shift operator  $S$, given by 
$$
S(f)(\zeta):=\zeta f(\zeta),\qquad f\in \cX,
$$ 
is an isomorphism of $\cX$ onto itself.
 
A closed subspace $\cM$ of $\cX$ is called {\it invariant} if  $S(\cM )\subset \cM$. 
It is said to be {\it $1$-invariant} (or {\it simply invariant}) for $S$ if $S(\cM)\subsetneq \cM$,  
and it is called {\it $2$-invariant} (or {\it doubly invariant}) if $S(\cM )= \cM$. 
The latter condition  is equivalent to the invariance of $\cM$ 
under multiplication by both $\zeta$ and $\overline\zeta$.
  
Let $\ZZ$ denote the integers and let $\NN:=\{n\in\ZZ:n\ge0\}$. 
Given $f\in \cX$,  we write 
\begin{align*}
[f]_{\NN}&:=\overline{\Span}^{\cX}\{z^nf : n\in\NN\},\\
[f]_{\ZZ}&:=\displaystyle\overline{\Span}^{\cX}\{z^nf:n\in\ZZ\}.
\end{align*}
A function $f$ is said to be  {\it $1$-invariant} if the space $[f]_{\NN}$ is $1$-invariant for $S$.  
We say that a function $f\in \cX$ is {\it cyclic } (resp.\  {\it bicyclic}) for $\cX$ 
if $[f]_\NN=\cX$ (resp.\ $[f]_\ZZ=\cX$). 
 
Let us begin with the classical case, namely $\cX=L^2(\TT)$.  
By a well-known theorem of Wiener,
the $2$-invariant subspaces have the form 
$$
\cM= \{f\in L^2(\TT) : 
f=0 \textrm{~a.e.\ on~}  \TT\setminus \sigma\},
$$ 
where $\sigma $ is a Borel subset of $\TT$ (see e.g.\ \cite[p.8, Theorem~1.2.1]{Ni2}). 
It follows from Szeg\H o's infimum theorem
that a function $f$ is $1$-invariant in $L^2(\TT)$ if and only if 
$\log|f|\in L^1(\TT)$ (see e.g.\ \cite[p.12, Corollary~4]{Ni1}). 

For $\cX=\cC^\infty(\TT)$, 
Makarov \cite {M1} gave a complete description of the invariant subspaces of $S$. 
He also obtained the following characterization of $1$-invariant functions of $\cC^\infty(\TT)$.

\begin{thm}[Makarov \protect{\cite[p.3]{M1}}]  
A function $f$ is  $1$-invariant  in $\cC^\infty(\TT)$ if and only if $\log|f|\in L^1(\TT)$.
\end{thm}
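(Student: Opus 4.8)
The plan is to pass to the dual. Since $\cC^\infty(\TT)^*=\cD'(\TT)$, a Hahn--Banach argument shows that $f$ fails to be $1$-invariant precisely when every distribution annihilating $\{\zeta^nf:n\ge1\}$ also annihilates $f$; hence $f$ is $1$-invariant if and only if there is $T\in\cD'(\TT)$ with $\langle T,\zeta^nf\rangle=0$ for all $n\ge1$ and $\langle T,f\rangle\ne0$. Since $\langle T,\zeta^nf\rangle=\langle fT,\zeta^n\rangle$, this says: $f$ is $1$-invariant if and only if there exists a distribution $T$ such that the product distribution $fT$ has all its negative Fourier coefficients equal to zero and its mean $\widehat{fT}(0)=\langle T,f\rangle$ nonzero (in particular $fT\not\equiv0$). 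I shall call such a $T$ an \emph{analytic divisor} of $f$.

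For the sufficiency of $\log|f|\in L^1(\TT)$, recall that this condition is equivalent to the existence of an outer function $O$ — in fact $O\in H^2$, since $f$ is bounded — with $|O|=|f|$ a.e.\ on $\TT$ and $O(0)\ne0$. As $\log|f|\in L^1$ forces $\{f=0\}$ to have measure zero, $T:=O/f$ is a well-defined element of $L^\infty(\TT)\subset\cD'(\TT)$, of modulus $1$ a.e., and $fT=O\in H^2$ is analytic with $\widehat{fT}(0)=O(0)\ne0$. So $T$ is an analytic divisor of $f$, and by the dual reformulation $f$ is $1$-invariant.

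The necessity is the heart of the matter, and I would prove it by reducing to the classical $L^2$ setting, where the quoted consequence of Szeg\H{o}'s theorem applies. Suppose $T$ is an analytic divisor of $f$. The aim is to ``regularize'' $T$ into an absolutely continuous one: since $\TT$ is compact, $T$ has finite order, so we may write $T=\partial^m\nu$ for a finite measure $\nu$, and Leibniz's rule then gives $fT=\sum_{l=0}^m\partial^l\sigma_l$ with each $\sigma_l$ a finite measure and $\sigma_m=\pm f\nu$. From the vanishing of the negative Fourier coefficients of $fT$ I would deduce, by an F.\ and M.\ Riesz type argument exploiting that $f$ and all its derivatives are smooth, that $f\nu$ — and enough of the remaining $\sigma_l$ — must be absolutely continuous. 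This should produce either an $h\in H^1\setminus\{0\}$ with $|h|\le|f|$ a.e. (which, by an elementary comparison of $\log^+$ and $\log^-$, is equivalent to $\log|f|\in L^1(\TT)$), or directly an $S\in L^2(\TT)$ with $fS$ analytic and $\widehat{fS}(0)\ne0$, so that $f$ is $1$-invariant in $L^2(\TT)$ and Szeg\H{o}'s theorem finishes the proof.

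I expect this regularization step to be the main obstacle. Analyticity of $\sum_l\partial^l\sigma_l$ does \emph{not} force the individual measures $\sigma_l$ to be absolutely continuous, so one must track the precise interdependence of the $\sigma_l$ coming from Leibniz's rule together with the finite-order structure theorem for distributions on $\TT$; the smoothness of $f$ is genuinely needed, since the statement is false for general $f\in L^2(\TT)$. A more hands-on alternative — when $\log|f|\notin L^1(\TT)$, directly constructing polynomials $q_N$ with $q_N(0)=-1$ and $q_Nf\to 0$ in $\cC^\infty(\TT)$, built from outer functions whose boundary moduli are made tiny off $\{|f|\ge\eta_N\}$ and only moderately large on it (here $\log|f|\notin L^1$ is exactly what reconciles $q_N(0)=-1$ with Jensen's inequality) — runs into the dual difficulty of controlling \emph{every} derivative of $q_Nf$, not merely its supremum norm.
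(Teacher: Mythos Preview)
The paper does not prove this theorem. It is stated in the introduction as a result of Makarov, with a citation to \cite[p.~3]{M1}, and is included purely as background and motivation; there is no argument in the paper to compare your proposal against.

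As for the proposal itself: the dual reformulation is correct, and your sufficiency argument (building an analytic divisor $T=O/f$ from the outer function $O$ with $|O|=|f|$) is complete and clean. The necessity direction, however, is a plan rather than a proof, and you say so yourself. The regularization step you outline --- writing $T=\partial^m\nu$, expanding $fT$ by Leibniz, and invoking an ``F.\ and M.\ Riesz type argument'' to force absolute continuity --- is precisely the substantive content of the theorem, and you have not carried it out. The obstacle you flag is real: analyticity of $\sum_l\partial^l\sigma_l$ does not by itself make the individual $\sigma_l$ absolutely continuous, so one must exploit the specific dependencies among the $\sigma_l$ coming from the Leibniz expansion together with the smoothness of $f$, and this requires a genuine inductive or structural argument that you have not supplied. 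As written, then, the necessity direction has a gap at exactly the point where the work lies; to close it you would need either to reproduce Makarov's argument from \cite{M1} or to make the F.\ and M.\ Riesz step precise, ultimately producing a nonzero $h\in H^1$ with $|h|\le C|f|$ a.e., from which $\log|f|\in L^1(\TT)$ follows.
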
 

The case $\cX=\cC^n(\TT)$ ($n\geq 1$) is more complicated, 
and no characterization of  $1$-invariant  functions is known 
(\cite[p.3]{M1}, see also \cite[Theorem~1.3]{M2} or \cite[Theorem~5]{M3}).
 
We shall  focus our attention on the {\it harmonic Dirichlet space} $\cD(\TT)$. 
This is the set of functions $f\in L^2(\TT)$  whose Fourier coefficients satisfy
$$ 
\cD(f):=\sum_{n\in\ZZ}|\hat{f}(n)|^2|n|<\infty.
$$
It becomes a Hilbert space if endowed with the norm $\|\cdot\|_{\cD(\TT)}$, given by
$$
\|f\|_{\cD(\TT)}^2:=\|f\|_{L^2(\TT)}^2+\cD(f)=
\sum_{n\in\ZZ}|\hat{f}(n)|^2(1+|n|).
$$

According to Douglas' formula \cite{D}, we have
$$ 
\cD(f)=\frac{1}{4\pi^2}\iint_{\TT^2}
\frac{|f(\zeta)-f(\zeta')|^2}{|\zeta-\zeta'|^2}\,|d\zeta'|\,|d\zeta|.
$$
This can also be written
$$
\cD(f)=\frac{1}{2\pi}\int_{\TT} \cD_\zeta(f)\,|d\zeta|,
$$  
where $\cD_\zeta(f)$ is the so-called local Dirichlet integral  of $f$ at $\zeta$, given by 
$$
\cD_\zeta(f):=
\frac{1}{2\pi}\int_{\TT}\frac{|f(\zeta)-f(\zeta')|^2}{|\zeta-\zeta'|^{2}}{|d\zeta'|},\qquad \zeta\in\TT.
$$

As $\cD(\TT)$ is dense in $L^2(\TT)$, 
if a function $f$ is cyclic for $\cD(\TT)$, 
then  it is also  cyclic for $L^2(\TT)$. 
So by Szeg\H o's theorem  we have
$$
f \text{ is cyclic for }\cD(\TT) \Rightarrow \int_\TT\log |f(\zeta)|\,|d\zeta| =-\infty.
$$  

In \cite{RRS}, Ross, Richter and Sundberg gave a complete characterization 
of the $2$-invariant subspaces $\cM$ of $\cD(\TT)$ in terms of their zero sets.  
In order to state their result, 
we need to introduce the notion of  logarithmic capacity (see  for instance \cite[\S2.4]{EKMR}). 

The {\it energy} of  a Borel probability measure $\mu$ on $\TT$ is defined by 
$$
I(\mu):=
\iint_{\TT^2} \log \frac{1}{|\zeta-\zeta'|}d\mu(\zeta)d\mu(\zeta')=\sum_{n=1}^{\infty}\frac{|\widehat{\mu}(n)|^2}{n}.
$$ 
Then we define the {\it logarithmic capacity} of a Borel subset $E$ of $\TT$ by 
$$c(E):=1/\inf\{ I(\mu): \mu\in \cP(E)\},$$
where $\cP(E)$ denotes the set of  probability measures supported on a compact subset of $E$. 
We say that a property holds {\it quasi--everywhere (q.e.)} 
if it holds everywhere outside a set of logarithmic capacity zero. 

It is known that, if $f\in\cD(\TT)$, 
then  the radial limit of  the Poisson integral of $f$ exists q.e.\ 
and is equal to $f$ a.e.\ (for more details we refer to  \cite{RRS} and the references therein). 
In the sequel, $f$ will denote this limit, and will therefore be defined q.e.\ on $\TT$.
We shall write $\cZ(f)$ for the zero set of $f$, namely
$$
\cZ(f):=\{\zeta\in \TT : f(\zeta)=0\}.
$$
Note that this set is defined up to sets of logarithimic capacity zero.
 
\begin{thm}[Richter--Ross--Sundberg \cite{RRS}]\label {RRS}   
$\cM$ is  a $2$-invariant subspace of $\cD(\TT)$ if and only if 
there exists a measurable set $E\subset \TT$ such that
$$
\cM=\cD_E:=\{f\in \cD(\TT): f|E=0 \text{~q.e.}\}.
$$
\end{thm}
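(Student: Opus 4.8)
The plan is to prove the two inclusions separately. The inclusion asserting that each $\cD_E$ is a closed $2$-invariant subspace is routine: the operators $S$ and $S^{-1}$ are bounded on $\cD(\TT)$ (immediate from the Fourier-coefficient form of the norm, since $1+|n\pm1|\le2(1+|n|)$) and they leave the quasi-everywhere defined zero set unchanged, so they map $\cD_E$ into itself; and $\cD_E$ is closed because $\cD(\TT)$ coincides, with equivalent norm, with the Sobolev space $H^{1/2}(\TT)$, whose polar sets are exactly the sets of zero logarithmic capacity, and $\cD(\TT)$-convergence implies quasi-everywhere convergence of a subsequence, so a $\cD(\TT)$-limit of functions vanishing q.e.\ on $E$ again vanishes q.e.\ on $E$.

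For the converse, fix a nonzero $2$-invariant subspace $\cM$ and, using separability, pick a dense sequence $(f_k)_{k\ge1}$ in $\cM$ with $\|f_k\|_{\cD(\TT)}=1$. Put $E:=\bigcap_k\cZ(f_k)$. Because $\cD(\TT)$-convergence forces quasi-everywhere convergence along a subsequence, every $f\in\cM$ vanishes q.e.\ on $E$, so $\cM\subseteq\cD_E$. I would then construct an ``outer'' element of $\cM$: for suitable positive scalars $c_k$ with $\sum_kc_k<\infty$ and for generically chosen unimodular constants $\omega_k$, the series $h:=\sum_kc_k\omega_kf_k$ converges in $\cD(\TT)$, belongs to $\cM$, and satisfies $\cZ(h)=E$ quasi-everywhere (a Fubini argument against a measure of finite energy shows that, for almost every choice of the $\omega_k$, no cancellation occurs at a q.e.\ point of $\TT\setminus E$). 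As $\cM$ is $2$-invariant and closed, $[h]_\ZZ\subseteq\cM$. Hence everything reduces to the \emph{principal case}
$$
[h]_\ZZ=\cD_{\cZ(h)}\qquad(h\in\cD(\TT)),
$$
for then $\cM\subseteq\cD_E=\cD_{\cZ(h)}=[h]_\ZZ\subseteq\cM$, and $\cM=\cD_E$.

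In the principal case the inclusion $[h]_\ZZ\subseteq\cD_{\cZ(h)}$ is immediate. For the reverse, let $g\in\cD_{\cZ(h)}$. Since the Dirichlet energy $\cD(\cdot)$ is a Markovian form, truncating $g$ by a radial $1$-Lipschitz retraction produces bounded $g_N\in\cD_{\cZ(h)}$ with $\|g_N\|_{\cD(\TT)}\le\|g\|_{\cD(\TT)}$ and $g_N\to g$ in $L^2(\TT)$, whence $g_N\to g$ in $\cD(\TT)$ by the Radon--Riesz property of Hilbert spaces; so we may assume $g$ bounded. By Wiener's description of the $2$-invariant subspaces of $L^2(\TT)$, the $L^2(\TT)$-closure of $\Span\{\zeta^nh:n\in\ZZ\}$ equals $\{u\in L^2(\TT):u=0\text{ a.e. on }\cZ(h)\}$, which contains $g$; so there are Laurent polynomials $p_j$ with $p_jh\to g$ in $L^2(\TT)$. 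The task is to upgrade this to $\cD(\TT)$-convergence. Using the identity $\|u\|_{\cD(\TT)}^2=\|u\|_{L^2(\TT)}^2+\frac1{2\pi}\int_\TT\cD_\zeta(u)\,|d\zeta|$ and a pointwise estimate of the form $\cD_\zeta(fg)\le2|f(\zeta)|^2\cD_\zeta(g)+2\|g\|_\infty^2\cD_\zeta(f)$ (valid for bounded $g$), it suffices to choose the $p_j$ so that, in addition, $\limsup_j\|p_jh\|_{\cD(\TT)}\le\|g\|_{\cD(\TT)}$; then $p_jh\to g$ weakly in $\cD(\TT)$ and, by Radon--Riesz again, in norm, so $g\in[h]_\ZZ$.

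The main obstacle is this last step: producing polynomials $p_j$ with $p_jh\to g$ in $L^2(\TT)$ whose Dirichlet energies $\cD(p_jh)$ are not merely bounded but asymptotically no larger than $\cD(g)$. Naive shortcuts fail: one cannot simply replace $g$ by $(g\overline h/(|h|^2+\ve))h$ and push the resulting bounded factor through, because the multiplier algebra of $\cD(\TT)$ is strictly smaller than $\cD(\TT)\cap L^\infty$, and Fej\'er means of a merely bounded function need not converge uniformly. What is needed is a structure-aware choice of the $p_j$, controlled through the fine (capacitary) behaviour of the local Dirichlet integral $\cD_\zeta$ near the quasi-everywhere defined zero set $\cZ(h)$: one must exploit that $\cD_\zeta(h)<\infty$ for q.e.\ $\zeta$, that $g(\zeta)=0$ for q.e.\ $\zeta\in\cZ(h)$, and the interaction between $\cD_\zeta$ and multiplication by $\zeta^n$, so as to keep the integrand $\cD_\zeta(p_jh)$ dominated and convergent (q.e.) to $\cD_\zeta(g)$. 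This potential-theoretic analysis of the local Dirichlet integral is the heart of the matter.
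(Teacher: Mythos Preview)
The paper does not prove this theorem; it is quoted from Richter--Ross--Sundberg \cite{RRS} and used as a black box in the proofs of Theorems~\ref{th2} and~\ref{th3}. There is therefore no in-paper argument to compare your proposal against.

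As for the proposal itself: the overall architecture is sound. The easy direction (each $\cD_E$ is closed and $2$-invariant) is correctly dispatched, and the reduction of the hard direction to the ``principal case'' $[h]_\ZZ=\cD_{\cZ(h)}$ via a single generator $h\in\cM$ with $\cZ(h)=E$ q.e.\ is the right strategy and matches the shape of the original argument. The truncation-and-Radon--Riesz manoeuvre to reduce to bounded $g$ is also fine.

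The genuine gap is that you stop precisely where the work begins. You correctly isolate the crux---producing Laurent polynomials $p_j$ with $p_jh\to g$ in $L^2(\TT)$ and $\limsup_j\|p_jh\|_{\cD(\TT)}\le\|g\|_{\cD(\TT)}$---and you correctly observe that naive multiplier arguments fail because $\cD(\TT)\cap L^\infty$ is strictly larger than the multiplier algebra of $\cD(\TT)$. But the final paragraph is a description of the obstacle, not a resolution: no construction of the $p_j$ is given, and the closing sentence (``This potential-theoretic analysis of the local Dirichlet integral is the heart of the matter'') is an explicit admission that the proof is incomplete. What is actually needed, and what \cite{RRS} supplies, is a capacity-theoretic approximation argument exploiting the regular Dirichlet-form structure of $\cD(\TT)\cong H^{1/2}(\TT)$: one shows that $\cD_E$ is the closure of the bounded functions in $\cD(\TT)$ vanishing on a neighbourhood of $E$, and then uses the contraction property of the Dirichlet integral under normal contractions to pass from such functions to $[h]_\ZZ$. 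Your outline gestures at this but does not carry it out.
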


Note that the problem of characterization of $1$-invariant subspaces of $\cD(\TT)$ remains open.  
It was proved in \cite{ARR,R} that, for each $n\in \NN\cup\{\infty\}$, 
there exists an invariant subspace $\cM$ of $\cD(\TT)$ such that $\dim(\cM/S(\cM))=n$.
This suggests that the lattice of $1$-invariant subspaces has a very complicated structure.

As a direct consequence of the Richter--Ross--Sundberg theorem, 
we obtain the following necessary conditions for cyclicity in $\cD(\TT)$.

\begin{theorem} \label{th1}
If $f$ is  cyclic for $\cD(\TT)$, then  
$$ 
\int_\TT\log |f(\zeta)|\,|d\zeta| =-\infty 
\quad \text{ and } \quad 
c(\cZ(f))=0.
$$
\end{theorem}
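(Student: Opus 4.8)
The statement has two conclusions: the Szegő-type condition $\int_\TT \log|f| = -\infty$ and the capacity condition $c(\cZ(f)) = 0$. The paper essentially states that Theorem \ref{th1} is "a direct consequence of the Richter–Ross–Sundberg theorem," so my plan is to derive both from Theorem \ref{RRS}.

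For the first condition: Since $\cD(\TT)$ embeds densely and continuously into $L^2(\TT)$, density of $[f]_\NN$ in $\cD(\TT)$ implies density of the same span in $L^2(\TT)$, i.e. $f$ is cyclic in $L^2(\TT)$. By Szegő's theorem (quoted in the excerpt via Ni1), a function is *not* $1$-invariant in $L^2(\TT)$ — equivalently its $\NN$-span is everything, i.e. cyclic — precisely when $\log|f| \notin L^1(\TT)$; since $f \in L^2 \subset L^1$ already controls the positive part, this forces $\int_\TT \log|f| = -\infty$. This is the routine part.

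For the second, capacity, condition: Suppose toward a contradiction that $c(\cZ(f)) > 0$. Let $E := \cZ(f)$, a measurable subset of $\TT$ of positive capacity, and consider $\cD_E = \{g \in \cD(\TT): g|E = 0 \text{ q.e.}\}$. By Theorem \ref{RRS}, $\cD_E$ is a $2$-invariant (in particular $S$-invariant and closed) subspace of $\cD(\TT)$. Now $f \in \cD_E$ since $f$ vanishes q.e. on $\cZ(f)$ by definition, and because $\cD_E$ is closed and invariant under multiplication by $\zeta$ we get $[f]_\NN \subseteq \cD_E$. So cyclicity of $f$ gives $\cD_E = \cD(\TT)$. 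The key step — and the only real content — is to show $\cD_E \neq \cD(\TT)$ when $c(E) > 0$, i.e. to exhibit a function in $\cD(\TT)$ that does not vanish q.e. on $E$. The constant function $\mathbf{1}$ is in $\cD(\TT)$ (all but the zeroth Fourier coefficient vanish, so $\cD(\mathbf 1) = 0$) and equals $1$ everywhere, hence is not zero on any set of positive capacity; thus $\mathbf 1 \notin \cD_E$, giving the contradiction. The one point that needs care is the fact that a nonzero capacity set is nonempty after removing capacity-zero sets, i.e. that "q.e. zero on $E$" genuinely fails for $\mathbf 1$ when $c(E) > 0$ — this is immediate from the monotonicity of capacity and $c(\emptyset) = 0$, together with the standard fact that countable unions of capacity-zero sets have capacity zero (so the ambiguity in defining $E$ and $\cZ(f)$ up to capacity-zero sets is harmless).

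I expect no serious obstacle: the whole argument is a packaging of Theorem \ref{RRS} plus Szegő, with the "hard" analytic content (the structure of $2$-invariant subspaces, the q.e. boundary behavior of $\cD(\TT)$ functions) already imported. The only place demanding a line of justification is checking that $\cD_E$ is a proper subspace when $c(E)>0$, handled by the constant function as above, and confirming $[f]_\NN \subseteq \cD_E$, which follows since $\cD_E$ is closed and $S$-invariant and contains $f$.
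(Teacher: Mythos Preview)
Your proposal is correct and follows exactly the route the paper indicates: the Szeg\H{o} condition via the dense continuous inclusion $\cD(\TT)\hookrightarrow L^2(\TT)$ together with Szeg\H{o}'s infimum theorem, and the capacity condition as a direct consequence of Theorem~\ref{RRS} by noting that $f\in\cD_{\cZ(f)}$ while $\mathbf{1}\notin\cD_{\cZ(f)}$ when $c(\cZ(f))>0$. The paper does not spell out these details but merely points to the two cited results, so your write-up is a faithful unpacking of the intended argument.
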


Our goal in this paper  is to give sufficient conditions for a function $f\in\cD(\TT)$ to be cyclic. 

For $\beta\in(0,1]$,
we shall denote by $\Lip_\beta(\TT)$  the set of functions $f$ continuous on $\TT$ such that
$$
\|f\|_{\Lip_\beta(\TT)}:=
\|f\|_{\cC(\TT)}
+ \sup_{\zeta,\zeta'\in\TT}\frac{|f(\zeta)-f(\zeta')|}{|\zeta-\zeta'|^{\beta}}<\infty.
$$
For $\alpha\in(0,1)$,
we set 
$$
\cC^{1+\alpha}(\TT)
:=\{f\in C^1(\TT): f'\in\Lip_\alpha(\TT)\}.
$$
Of course, if $f$ belongs to $\Lip_\beta(\TT)$ or $C^{1+\alpha}(\TT)$, 
then $\cZ(f)$ is closed in~$\TT$.

We shall establish the following result.

\begin{theorem}\label{th2}
Let $f\in \cD(\TT)$ such that  
$|f| \in\cC^{1+\alpha}(\TT)$, where $\alpha\in(0,1)$. 
Suppose further that $\log|f|\notin L^1(\TT)$.
Then 
$[f^2]_{\NN}= \cD_{\cZ(f)}$.
\end{theorem}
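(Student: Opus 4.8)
The plan is to prove the two inclusions $[f^2]_\NN\subseteq\cD_{\cZ(f)}$ and $\cD_{\cZ(f)}\subseteq[f^2]_\NN$ separately. The first inclusion is the easy direction: since $f\in\cD(\TT)$ and $|f|\in\cC^{1+\alpha}(\TT)$, the product $f^2$ vanishes q.e.\ on $\cZ(f)$ (indeed it vanishes there in a strong, quantitative sense because $|f|$ is $\cC^{1+\alpha}$), and $\cD_{\cZ(f)}$ is a closed $S$-invariant subspace by the Richter--Ross--Sundberg theorem (Theorem~\ref{RRS}); hence $[f^2]_\NN\subseteq\cD_{\cZ(f)}$. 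For this I first need to check that $f^2\in\cD(\TT)$, which follows from $f\in\cD(\TT)\cap L^\infty(\TT)$ together with the fact that $\cD(\TT)\cap L^\infty(\TT)$ is an algebra (a standard consequence of Douglas' formula and the Cauchy--Schwarz inequality applied to $\cD_\zeta$). Smoothness of $|f|$ also guarantees $\cZ(f)$ is closed, so $\cD_{\cZ(f)}$ is exactly the $2$-invariant subspace attached to the closed set $E=\cZ(f)$.

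For the reverse inclusion, the natural strategy is an approximation scheme built around the outer-type factorization suggested by the hypothesis $|f|\in\cC^{1+\alpha}(\TT)$. Write $u:=|f|$. Since $\log u\notin L^1(\TT)$, the outer function with boundary modulus $u$ degenerates, and one should think of $f^2$ as behaving like $u^2$ times a unimodular factor that plays no role in the cyclicity question modulo the bicyclic versus cyclic distinction handled by Szeg\H{o}. Concretely, I would approximate $u$ from below by the smoothed functions $u_\ve:=\sqrt{u^2+\ve^2}-\ve$ or, better, work with $\log(u+\ve)$, for which $\log(u_\ve)\in L^1$; the associated outer functions $O_\ve$ satisfy $O_\ve\to$ (the degenerate object) and, crucially, $|O_\ve|\uparrow u$ pointwise q.e.\ as $\ve\downarrow0$. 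The key analytic input is that because $u\in\cC^{1+\alpha}(\TT)$ and $\cC^{1+\alpha}\subset\cD(\TT)$ with good multiplier properties, these regularized functions stay bounded in $\cD(\TT)$-norm uniformly, or at least their images under the relevant multiplication operators do; this is where the $\cC^{1+\alpha}$ hypothesis (as opposed to mere continuity or $\Lip_\beta$) is used, since one needs $u'$ to have enough regularity that the logarithmic derivative $O_\ve'/O_\ve$ is controlled near $\cZ(f)$ --- the order-of-contact estimate $u(\zeta)\asymp\dist(\zeta,\cZ(f))^k$ type bounds follow from $u\in\cC^{1+\alpha}$ and $u\ge0$.

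The core of the argument is then: given $g\in\cD_{\cZ(f)}$, produce functions $h_n\in[f^2]_\NN$ with $h_n\to g$ in $\cD(\TT)$. I would first reduce to $g$ of a convenient dense class (e.g.\ $g\in\cD(\TT)$ vanishing on a neighborhood of $\cZ(f)$, or $g$ a trigonometric polynomial times a cutoff vanishing near $\cZ(f)$), using that such functions are dense in $\cD_{\cZ(f)}$ --- itself a consequence of Theorem~\ref{RRS} and a standard capacitary approximation. For such $g$, the ratio $g/f^2$ is a bounded $\cD(\TT)$-multiplier away from $\cZ(f)$, and one multiplies $f^2$ by polynomial approximants $p_n$ of $g/f^2$; the estimate $\|p_n f^2-g\|_{\cD}\to0$ is reduced to controlling $\cD_\zeta(p_n f^2-g)$ via Douglas' formula, splitting the circle into a part near $\cZ(f)$ (where smallness comes from the vanishing of $g$ and the controlled growth of $|O_\ve|^2$) and a part away from $\cZ(f)$ (where it is a routine weighted polynomial approximation). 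I expect the main obstacle to be precisely this uniform control near $\cZ(f)$: one must show that multiplication by the (unbounded, degenerate) outer factor built from $u^2$ does not blow up the local Dirichlet integrals, and this requires combining the $\cC^{1+\alpha}$-regularity of $|f|$ with a capacitary estimate showing that the exceptional set $\cZ(f)$ --- of capacity zero by the necessary condition, though here it could have positive capacity in general, in which case one instead only gets $\cD_{\cZ(f)}$ rather than all of $\cD(\TT)$ --- is negligible for the Dirichlet energy. Handling the interface, i.e.\ patching the near and far estimates with cutoff functions whose own Dirichlet norms are controlled by the capacity of $\cZ(f)$, is the technical heart of the proof.
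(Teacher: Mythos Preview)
Your proposal misses the paper's key reduction and, as written, has a genuine gap in the ``hard'' inclusion.

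The paper does \emph{not} attempt to approximate an arbitrary $g\in\cD_{\cZ(f)}$ by elements $p_n f^2$ with analytic polynomials $p_n$. Your scheme asks for $p_n\to g/f^2$ with $p_n$ analytic polynomials, but $g/f^2$ has no reason to lie in any analytic class: it is a general element of $\cD(\TT)$ (even after your density reduction to $g$ vanishing near $\cZ(f)$), with nonzero negative Fourier coefficients. Approximating it by analytic polynomials in a norm strong enough to give $p_n f^2\to g$ in $\cD(\TT)$ is essentially equivalent to the statement you are trying to prove, and your sketch gives no mechanism for it. Relatedly, your outline never really uses the hypothesis $\log|f|\notin L^1(\TT)$ in the approximation step; it appears only as commentary on the degeneration of the outer function.

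The paper's route avoids this entirely. By Lemma~\ref{lemmul}, to show $[f^2]_\NN=[f^2]_\ZZ$ it suffices to produce a \emph{single} family $p_\epsilon\in H^\infty$ with $p_\epsilon(0)=1$, $p_\epsilon f^2\in\cD(\TT)$, and $\|p_\epsilon f^2\|_{\cD(\TT)}\to 0$. One takes $p_\epsilon$ outer with $|p_\epsilon|=e^{-M_\epsilon}/(|f|+\epsilon)$, where the normalizing constant $M_\epsilon=\int_\TT\log(1/(|f|+\epsilon))\,|d\zeta|/2\pi$ tends to $+\infty$ precisely because $\log|f|\notin L^1(\TT)$. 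The $L^2$ part of $\|p_\epsilon f^2\|_{\cD(\TT)}$ is then trivially $O(e^{-M_\epsilon})$. For $\cD(p_\epsilon f^2)$ one uses Douglas' formula, symmetrizes over $\Gamma=\{|f(\zeta')|\le|f(\zeta)|\}$, and splits into two terms $A_\epsilon,B_\epsilon$. The term $A_\epsilon$ is controlled by $e^{-2M_\epsilon}\cD(f)$ after the elementary bound $|f(\zeta)+f(\zeta')|/(|f(\zeta)|+\epsilon)\le 2$ on $\Gamma$. The term $B_\epsilon$ is where the hypothesis $|f|\in\cC^{1+\alpha}$ enters, and it enters through the Carleson--Jacobs--Havin--Shamoyan theorem (Lemma~\ref{tam}): the outer function $F_\epsilon$ with $|F_\epsilon|=|f|+\epsilon$ lies in $\Lip_{(1+\alpha)/2}(\overline\DD)\subset\cD(\TT)$ with a Dirichlet-integral bound \emph{uniform in $\epsilon$}, whence $B_\epsilon\lesssim e^{-2M_\epsilon}\cD(F_\epsilon)\to 0$. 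Once $[f^2]_\NN=[f^2]_\ZZ$, the Richter--Ross--Sundberg theorem identifies the latter as $\cD_{\cZ(f^2)}=\cD_{\cZ(f)}$.

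So the two ideas you are missing are: (i) the reduction to $2$-invariance via Lemma~\ref{lemmul}, which replaces the hopeless task of approximating every $g$ by the tractable one of driving $\|p_\epsilon f^2\|_{\cD(\TT)}$ to zero; and (ii) the precise role of $\cC^{1+\alpha}$, which is to invoke Lemma~\ref{tam} for a uniform $\cD$-bound on the outer function with modulus $|f|+\epsilon$, not to control order of contact or logarithmic derivatives near $\cZ(f)$.
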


Combining Theorems~\ref{th1} and \ref{th2}, 
we deduce 

\begin{coro}\label{co1}
Let $f\in \cD(\TT)$ such that  
$|f| \in\cC^{1+\alpha}(\TT)$, where $\alpha\in(0,1)$. 
Then the  following assertions are equivalent:
\begin{enumerate}
\item $f^2$ is cyclic for $\cD(\TT)$;
\item $ \log |f|\notin L^1(\TT)$ and $c(\cZ(f))=0$.
\end{enumerate}
\end{coro}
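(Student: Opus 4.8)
The plan is to obtain Corollary~\ref{co1} directly by combining Theorems~\ref{th1} and~\ref{th2}, once two routine preliminary points are settled. First I would check that $f^2\in\cD(\TT)$, so that the cyclicity of $f^2$ and the conclusion of Theorem~\ref{th2} both make sense: since $|f|\in\cC^{1+\alpha}(\TT)\subset\cC(\TT)$ and $\TT$ is compact, $f$ is bounded, and Douglas' formula together with the pointwise bound
$$
|f(\zeta)^2-f(\zeta')^2|^2\le 4\|f\|_{\cC(\TT)}^2\,|f(\zeta)-f(\zeta')|^2
$$
gives $\cD(f^2)\le 4\|f\|_{\cC(\TT)}^2\,\cD(f)<\infty$, while $f^2\in L^2(\TT)$ because $f\in L^\infty(\TT)$. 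Second I would record that $\cZ(f^2)=\cZ(f)$, and that, since $f\in L^2(\TT)$ forces $\log^+|f|\in L^1(\TT)$, the three conditions $\log|f|\notin L^1(\TT)$, $\int_\TT\log|f|\,|d\zeta|=-\infty$ and $\int_\TT\log|f^2|\,|d\zeta|=-\infty$ are mutually equivalent.

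With these in hand, the implication $(1)\Rightarrow(2)$ is immediate: if $f^2$ is cyclic for $\cD(\TT)$, then Theorem~\ref{th1} applied to $f^2$ gives $\int_\TT\log|f^2|\,|d\zeta|=-\infty$ and $c(\cZ(f^2))=0$, which by the preliminary observations is exactly $\log|f|\notin L^1(\TT)$ and $c(\cZ(f))=0$.

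For $(2)\Rightarrow(1)$, I would assume $\log|f|\notin L^1(\TT)$ and $c(\cZ(f))=0$. The first assumption together with $|f|\in\cC^{1+\alpha}(\TT)$ lets me invoke Theorem~\ref{th2}, giving $[f^2]_\NN=\cD_{\cZ(f)}$. I would then use the second assumption to identify $\cD_{\cZ(f)}$ with the whole space: for any $g\in\cD(\TT)$ the set $\{\zeta\in\cZ(f):g(\zeta)\ne0\}$ is a subset of $\cZ(f)$, hence of logarithmic capacity zero, so $g|\cZ(f)=0$ q.e.\ and $g\in\cD_{\cZ(f)}$; thus $\cD_{\cZ(f)}=\cD(\TT)$ and $f^2$ is cyclic.

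The whole argument is essentially bookkeeping built on the two quoted theorems; the only points requiring a little attention are the observation that a capacity-zero zero set forces $\cD_{\cZ(f)}=\cD(\TT)$, and the verification that $f^2$ genuinely belongs to $\cD(\TT)$ so that both theorems apply to it. I do not expect any genuine obstacle here.
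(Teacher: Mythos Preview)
Your proposal is correct and follows exactly the route the paper indicates: the corollary is stated as an immediate consequence of Theorems~\ref{th1} and~\ref{th2}, and your argument simply spells out the (routine) details the paper leaves implicit. The only additions you make --- checking $f^2\in\cD(\TT)$ via boundedness and Douglas' formula, and noting that $c(\cZ(f))=0$ forces $\cD_{\cZ(f)}=\cD(\TT)$ --- are exactly the small verifications one would expect and present no difficulty.
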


A closed set $E\subset \TT$ is said to be a {\it Carleson set} 
(and we write $E \in (C)$) if 
$$
\int _{\TT}\log \frac{1}{d(\zeta,E)}\,|d\zeta| <\infty.
$$
For background information on Carleson sets, see e.g.\ \cite[\S4.4]{EKMR}.
Note that, if $f\in\Lip_\beta (\TT)$ and $\cZ(f)\notin(C)$,  then $\log|f|\notin L^1(\TT)$. 

It is known that $\Lip_\beta(\TT)\subset\cD(\TT)$ 
if and only if $\beta >1/2$. 
The inclusion $\Lip_\beta(\TT)\subset \cD(\TT)$ 
for  $\beta>1/2$
can easily be obtained from Douglas' formula.

We shall establish the following theorem.

\begin{theorem}\label{th3}
Let $f\in Lip_{\beta}(\TT)$, where $\beta\in(\frac{1}{2},1]$. 
If $\cZ(f) \notin (C)$, then
$[f]_{\NN}=\cD _{\cZ(f)}$.
If furthermore $c(\cZ(f))=0$, 
then $f$ is cyclic for  $\cD(\TT)$. 
\end{theorem}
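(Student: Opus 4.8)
The plan is to prove Theorem~\ref{th3} in two stages, first identifying $[f]_\NN$ with $\cD_{\cZ(f)}$, and then deducing cyclicity when $c(\cZ(f))=0$. For the inclusion $[f]_\NN\subset\cD_{\cZ(f)}$, note that $\cD_{\cZ(f)}$ is by Theorem~\ref{RRS} a $2$-invariant (hence invariant) closed subspace, and since $f\in\Lip_\beta(\TT)\subset\cD(\TT)$ with $f|\cZ(f)=0$ everywhere (the zero set being genuinely closed here), we have $f\in\cD_{\cZ(f)}$, whence $z^nf\in\cD_{\cZ(f)}$ for all $n\in\NN$ and the closed span is contained in $\cD_{\cZ(f)}$. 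The substantive content is the reverse inclusion $\cD_{\cZ(f)}\subset[f]_\NN$. For this I would exploit the hypothesis $\cZ(f)\notin(C)$, i.e.\ $\int_\TT\log(1/d(\zeta,\cZ(f)))\,|d\zeta|=\infty$; since $|f(\zeta)|\asymp d(\zeta,\cZ(f))^\beta$ near $\cZ(f)$ (from the Lipschitz condition and the fact that $f$ vanishes on $\cZ(f)$), this forces $\log|f|\notin L^1(\TT)$, which is the engine that drives non-uniqueness/cyclicity phenomena of Szeg\H o type.

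The core analytic step is to show that the constant function $1$ — more precisely, any element of $\cD_{\cZ(f)}$ — lies in $[f]_\NN$. A natural strategy: it suffices to show $1\cdot\chi \in [f]_\NN$ for a dense-enough family, or to produce outer-type multipliers. Concretely, I would look for functions $g_n\in\cD(\TT)$ (or in $H^\infty\cap\cD$) with $g_nf\to h$ in $\cD(\TT)$ for prescribed $h\in\cD_{\cZ(f)}$. The key difficulty is controlling the Dirichlet norm $\cD(g_nf-h)$, not merely the $L^2$ norm; this is where the local Dirichlet integral $\cD_\zeta$ and Douglas's formula become essential. One expects to write $h = (h/f)\cdot f$ formally and regularize $h/f$ by truncation, using that $h$ vanishes q.e.\ on $\cZ(f)$ and $|f|\in\Lip_\beta$ to bound the quotient away from $\cZ(f)$; the divergence $\log|f|\notin L^1$ should allow an outer function attached to $f$ to be approximated by polynomials in the Smirnov/de~Branges--Rovnyak sense, so that multiplication by $f$ does not lose too much. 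I would also use the decomposition of $\cD$-functions via their behaviour near the (Carleson-size-zero in the relevant sense, but here possibly large) set $\cZ(f)$, together with the fact that bounded functions in $\cD(\TT)$ that are constant near $\cZ(f)$ form a rich supply.

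The passage from $[f]_\NN=\cD_{\cZ(f)}$ to cyclicity when $c(\cZ(f))=0$ is then immediate from Theorem~\ref{RRS}: if $c(\cZ(f))=0$ then the only $2$-invariant subspace $\cD_E$ containing functions that are nonzero q.e.\ is forced to be all of $\cD(\TT)$, because the q.e.-vanishing condition on a capacity-zero set is vacuous; that is, $\cD_{\cZ(f)}=\cD_\emptyset=\cD(\TT)$ (here I use that a set of capacity zero is negligible for the q.e.\ vanishing condition, so $\cD_{\cZ(f)}=\cD(\TT)$). Hence $[f]_\NN=\cD(\TT)$, which is exactly the statement that $f$ is cyclic.

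The main obstacle I anticipate is the reverse inclusion $\cD_{\cZ(f)}\subset[f]_\NN$, specifically obtaining \emph{Dirichlet-norm} convergence of the approximants $g_nf$, rather than merely $L^2$-convergence. Uniform boundedness of $\cD(g_nf)$ combined with weak convergence would suffice, but establishing such a uniform bound requires delicate estimates tying together the $\Lip_\beta$ regularity of $f$, the behaviour of $d(\cdot,\cZ(f))$, and the local Dirichlet integrals near $\cZ(f)$; I expect this to be the technical heart of the argument, likely organized around Douglas's formula and a careful choice of regularizing multipliers adapted to the geometry of $\cZ(f)$.
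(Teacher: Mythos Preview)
Your outline correctly identifies the overall architecture---the inclusion $[f]_\NN\subset\cD_{\cZ(f)}$ is immediate, and the cyclicity conclusion follows from $\cD_{\cZ(f)}=\cD(\TT)$ when $c(\cZ(f))=0$---and you rightly single out the reverse inclusion as the substantive step, with Douglas's formula and the Lipschitz regularity as the main tools. However, the strategy you propose for that step, namely approximating each $h\in\cD_{\cZ(f)}$ by $g_nf$ via a regularization of the formal quotient $h/f$, has a genuine obstacle: for a generic $h\in\cD_{\cZ(f)}$ you have no control on the rate at which $h$ vanishes on $\cZ(f)$ (only that it does so q.e.), so $h/f$ need not be bounded, nor in $\cD(\TT)$, nor amenable to truncation in any useful way. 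Pursuing this route would amount to re-proving the Richter--Ross--Sundberg theorem by hand.

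The paper sidesteps this entirely by a reduction you have not mentioned: rather than showing $\cD_{\cZ(f)}\subset[f]_\NN$ directly, it suffices to show that $[f]_\NN$ is $2$-invariant, i.e.\ $[f]_\NN=[f]_\ZZ$; Theorem~\ref{RRS} then immediately identifies $[f]_\ZZ$ with $\cD_{\cZ(f)}$. By Lemma~\ref{lemmul}, $2$-invariance of $[f]_\NN$ is equivalent to a \emph{single}, concrete approximation problem: produce $p_\epsilon\in H^\infty$ with $p_\epsilon(0)=1$, $p_\epsilon f\in\cD(\TT)$, and $\|p_\epsilon f\|_{\cD(\TT)}\to0$. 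The paper takes $p_\epsilon$ to be the outer function with $|p_\epsilon(\zeta)|=e^{-M_\epsilon}(d(\zeta,E)^\gamma+\epsilon)^{-1/2}$ for a closed $E\subset\cZ(f)$ with $|E|=0$, $E\notin(C)$, and small $\gamma>0$; the divergence $M_\epsilon\to\infty$ drives the norm to zero, and the Dirichlet integral is controlled via Douglas's formula, Lemma~\ref{lip}, and the Carleson--Richter--Sundberg formula applied to the auxiliary outer function $F_\epsilon$ with $|F_\epsilon|=d(\cdot,E)^\gamma+\epsilon$. Your proposal contains several of the right ingredients (outer multipliers, Douglas's formula, Lipschitz bounds), but without this reduction the plan to ``regularize $h/f$'' does not go through.

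A minor point: you write $|f(\zeta)|\asymp d(\zeta,\cZ(f))^\beta$, but only the upper bound $|f(\zeta)|\lesssim d(\zeta,\cZ(f))^\beta$ follows from $f\in\Lip_\beta(\TT)$; fortunately that is all one needs to deduce $\log|f|\notin L^1(\TT)$ from $\cZ(f)\notin(C)$.
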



\section{Proof of Theorem \ref {th2}}
 
For the proof of Theorem~\ref{th2},
we shall need the following standard result.

\begin{lem}\label{lemmul} 
Let $f\in\cD(\TT)$. 
The  following assertions are equivalent:
\begin{enumerate}
\item $[f]_\NN=[f]_{\ZZ}$;
\item $f \in [Sf]_\NN$;
\item $\inf\{\|pf\|_{\cD(\TT)}: p\in H^\infty,~  pf\in\cD(\TT)  \text{~and~}   p(0)=1\}=0.$
\end{enumerate}
\end{lem}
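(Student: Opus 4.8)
The plan is to prove the three implications $(1)\Rightarrow(2)\Rightarrow(3)\Rightarrow(1)$ in a cycle. The equivalence $(1)\Leftrightarrow(2)$ is essentially formal: if $[f]_\NN=[f]_\ZZ$ then in particular $\overline\zeta f=S^{-1}f\in[f]_\ZZ=[f]_\NN$, and since $S$ is an isomorphism of $\cD(\TT)$ onto itself, applying $S$ gives $f\in S([f]_\NN)=\overline{\Span}\{z^nf:n\ge1\}=[Sf]_\NN$, which is $(2)$. Conversely, if $f\in[Sf]_\NN$, then $[f]_\NN=[Sf]_\NN\subset[f]_\NN$ shows $[f]_\NN$ is $S$-invariant and contains $f$; iterating, $\overline\zeta^kf\in[f]_\NN$ for all $k\ge0$, hence $[f]_\ZZ\subset[f]_\NN$, and the reverse inclusion is trivial, giving $(1)$.

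The substantive part is $(2)\Leftrightarrow(3)$. For $(3)\Rightarrow(2)$: if there are polynomials (or $H^\infty$ functions) $p_k$ with $p_k(0)=1$, $p_kf\in\cD(\TT)$, and $\|p_kf\|_{\cD(\TT)}\to0$, write $p_k(z)=1+zq_k(z)$; then $zq_k(z)f(z)=p_k(z)f(z)-f(z)$, so $f=p_kf-zq_kf$. Now $zq_kf\in[Sf]_\NN$ (it is $z$ times $q_kf$, and $q_kf=p_kf-f\in\cD(\TT)$, approximable by $z^nf$-combinations provided $q_kf\in[f]_\NN$ — here one uses that multiplication by $H^\infty$ functions, suitably approximated by polynomials, stays inside $[f]_\NN$). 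Since $\|p_kf\|_{\cD(\TT)}\to0$, we get $f=\lim_k(-zq_kf)\in[Sf]_\NN$. For the converse $(2)\Rightarrow(3)$: if $f\in[Sf]_\NN$, then $f$ is a limit in $\cD(\TT)$ of elements $z\,r_k(z)f(z)$ with $r_k$ polynomials; thus $\|f-zr_kf\|_{\cD(\TT)}\to0$, i.e.\ $\|(1-zr_k)f\|_{\cD(\TT)}\to0$. Setting $p_k:=1-zr_k$ gives polynomials with $p_k(0)=1$ and $\|p_kf\|_{\cD(\TT)}\to0$, which is $(3)$ (polynomials are in $H^\infty$).

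The main technical obstacle is the bookkeeping around $H^\infty$ multipliers in condition $(3)$ versus polynomial approximants: one must check that if $p\in H^\infty$ with $pf\in\cD(\TT)$ then the products $z^n pf$ and more generally the relevant intermediate functions lie in $[f]_\NN$, which requires knowing that $H^\infty\cap\mathrm{Mult}(\cD(\TT))$-type multiplication respects $[f]_\NN$, or at least that $H^\infty$ functions can be replaced by their Cesàro/Abel polynomial approximants $p\ast K_N$ without losing the $\cD(\TT)$-convergence of $p_kf$. This is the ``standard'' content referred to in the lemma's statement, and is handled by a uniform-boundedness / dominated-convergence argument on Fourier coefficients using that $\|z^n g\|_{\cD(\TT)}$ grows only linearly in $n$, so that de la Vallée Poussin or Fejér means of $p$ applied to $f$ converge to $pf$ in $\cD(\TT)$-norm whenever $pf\in\cD(\TT)$. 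Once that approximation lemma is in place, the cycle of implications above closes routinely.
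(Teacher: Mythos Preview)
Your approach is essentially the same as the paper's: the equivalence $(1)\Leftrightarrow(2)$ is formal (the paper just says ``since $S$ is invertible''), $(2)\Rightarrow(3)$ goes via $p_k:=1-zr_k$ exactly as you do, and for $(3)\Rightarrow(2)$ both you and the paper write $p=1-zq$ and reduce to showing $zqf\in[Sf]_\NN$ whenever $q\in H^\infty$ and $qf\in\cD(\TT)$. The only difference is that the paper does not argue this last point directly but cites \cite[Proposition~3.4]{RRS}; your Fej\'er/de la Vall\'ee Poussin sketch is precisely the standard route to that proposition, so the two proofs coincide once the citation is unpacked.
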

\begin{proof}  Since $S$ is invertible, (1) and (2) are equivalent. 

If $f\in[Sf]_\NN$,
then there is a sequence $(p_n)$ of polynomials  
such that $p_n(0)=1$ and  $\|(1-p_n) f\|_{\cD(\TT)}\to 0$. 
This proves that (2) implies (3).

Finally, suppose that (3) holds. 
Let  $(p_n) \subset H^\infty$ be a sequence such that 
$p_n(0) =1$, $p_nf \in \cD (\TT)$ 
and $\|p_nf\|_{\cD(\TT)}\to 0$. 
Writing $p_n = 1-zq_n$,
by \cite[Proposition 3.4]{RRS} 
we have $zq_n f \in [Sf]_{\NN}$. 
Since $zq_nf$ converges to $f$,
it follows that $f\in[Sf]_\NN$,
so that (2) holds.
\end{proof}

We shall also need the following result, 
which is a special case of a theorem due to  
Carleson--Jacobs--Havin--Shamoyan \cite[Theorem~6.1]{B}.

\begin{lem}\label{tam}  
Let $F$ be an outer function on $\DD$ that is continuous on $\overline{\DD}$. 
If $|F|\in\cC^{1+\alpha}(\TT)$, where $\alpha\in(0,1)$,
 then  $F\in \Lip_{({1+\alpha})/{2}}(\overline{\DD})$. 
 Furthermore, the Lipschitz constant associated to $F$ on $\overline{\DD}$  
 depends only on the Lipschitz constants and bounds for the derivatives of $|F|$ on $\TT$.
\end{lem}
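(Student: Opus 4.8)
The plan is to reduce the assertion, via the Hardy--Littlewood theorem, to a growth estimate on $F'$, and then to argue separately near and far from the zero set $E:=\cZ(F)=\{\zeta\in\TT:|F(\zeta)|=0\}$. Recall that, for $0<\gamma<1$, an analytic function $g$ on $\DD$ lies in $\Lip_\gamma(\overline\DD)$ if and only if $|g'(z)|=O\bigl((1-|z|)^{\gamma-1}\bigr)$, and moreover $\|g\|_{\Lip_\gamma(\overline\DD)}$ is comparable to $|g(0)|+\sup_{z\in\DD}(1-|z|)^{1-\gamma}|g'(z)|$. Writing $w:=|F|$ and $\gamma:=(1+\alpha)/2\in(1/2,1)$, and noting that $\|F\|_{\cC(\overline\DD)}=\|w\|_{\cC(\TT)}$ by the maximum principle, it therefore suffices to prove
\[
|F'(z)|\le C\,(1-|z|)^{(\alpha-1)/2}\qquad(z\in\DD),
\]
with $C$ depending only on $\alpha$ and on the bounds for $w'$ and the $\Lip_\alpha$-seminorm of $w'$.

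Two elementary facts will be used repeatedly. First, since $w\ge0$ on $\TT$ and $w$ has a local minimum at each point of $E$, we have $w'=0$ on $E$; hence, from $w'\in\Lip_\alpha(\TT)$, $|w'(\zeta)|\lesssim d(\zeta,E)^\alpha$ and $w(\zeta)\lesssim d(\zeta,E)^{1+\alpha}$ for all $\zeta\in\TT$. Second, because $F$ is outer, $\log|F|$ is the Poisson integral of $\log w$; since $\log w(\zeta)\le C_1+(1+\alpha)\log|\zeta-\eta|$ for every fixed $\eta\in E$, while $z\mapsto\log|z-\eta|$ is harmonic in $\DD$, the Poisson integral yields
\[
|F(z)|\le e^{C_1}\,\dist(z,E)^{1+\alpha}\qquad(z\in\overline\DD).
\]
If $E=\emptyset$ then $w$ is bounded below, so $\log w\in\cC^{1+\alpha}(\TT)$, hence its harmonic conjugate, $\log F$, and $F=e^{\log F}$ all extend to $\cC^{1+\alpha}(\overline\DD)\subset\Lip_\gamma(\overline\DD)$; so we may assume $E\ne\emptyset$.

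Fix $z\in\DD$, put $\delta:=1-|z|$ and $\zeta_0:=z/|z|$. If $\dist(z,E)\le2\delta$, a Cauchy estimate on $\{|\omega-z|\le\delta/2\}$ together with the bound above for $|F|$ gives $|F'(z)|\lesssim\delta^{-1}\sup_{|\omega-z|\le\delta/2}|F(\omega)|\lesssim\delta^{-1}\cdot\delta^{1+\alpha}=\delta^{\alpha}\le\delta^{(\alpha-1)/2}$. The remaining case, $\dist(z,E)>2\delta$, is the principal one: here $d_0:=d(\zeta_0,E)$ satisfies $\delta<d_0\asymp\dist(z,E)$, so that $w>0$ on the arc $\{|\zeta-\zeta_0|<d_0/2\}$ and $|F(z)|\lesssim d_0^{1+\alpha}$. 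Differentiating the outer representation $F(z)=\lambda\exp\bigl(\frac1{2\pi}\int_\TT\frac{\zeta+z}{\zeta-z}\log w(\zeta)\,|d\zeta|\bigr)$, $|\lambda|=1$, and using $\frac1{2\pi}\int_\TT\frac{2\zeta}{(\zeta-z)^2}|d\zeta|=0$, one gets
\[
F'(z)=F(z)\cdot\frac1\pi\int_\TT\frac{\zeta\bigl(\log w(\zeta)-\log w(\zeta_0)\bigr)}{(\zeta-z)^2}\,|d\zeta|,
\]
and it remains to bound the right-hand side by a constant times $\delta^{(\alpha-1)/2}$.

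This last step is where cancellation in the singular integral must be exploited, and I expect it to be the main obstacle: estimating the modulus of the integrand pointwise is hopelessly lossy --- it is not even uniform in the geometry of $E$, since $\|\log w\|_{L^1}$ is not controlled by the $\cC^{1+\alpha}$-data --- so one must integrate by parts to move the derivative onto $\log w$, split the integral at $|\zeta-\zeta_0|\sim d_0/4$, use that off $E$ the function $\log w$ inherits the $\cC^{1+\alpha}$-regularity of $w$, and balance the (at worst logarithmic) gain from the oscillation of $(\zeta-z)^{-2}$ against the decay $|F(z)|\lesssim d_0^{1+\alpha}$. This is precisely the local estimate supplied by the Carleson--Jacobs--Havin--Shamoyan theorem; granting it, assembling the two cases and keeping track of the constants --- which are local, hence controlled by the bounds for $w'$ and the $\Lip_\alpha$-seminorm of $w'$ --- finishes the proof.
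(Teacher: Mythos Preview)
The paper does not prove this lemma: it is quoted as a special case of the Carleson--Jacobs--Havin--Shamoyan theorem (via \cite[Theorem~6.1]{B}), with no argument supplied. So there is no in-paper proof to compare against.

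Your outline sets up the standard reduction correctly --- the Hardy--Littlewood criterion, the Cauchy estimate when $\dist(z,E)\lesssim 1-|z|$, and the pointwise bound $|F(z)|\lesssim\dist(z,E)^{1+\alpha}$ are all fine --- but it is not a proof. At the decisive step, the far-from-$E$ estimate on $|F(z)|\cdot\bigl|\int_\TT \zeta(\log w(\zeta)-\log w(\zeta_0))(\zeta-z)^{-2}\,|d\zeta|\bigr|$, you explicitly invoke ``the local estimate supplied by the Carleson--Jacobs--Havin--Shamoyan theorem.'' That theorem \emph{is} the lemma; citing it to finish is circular, and everything before that point is soft preparation that any proof would make. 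There is a second, more concrete gap. Your disposal of the case $E=\emptyset$ passes through $\log w\in\cC^{1+\alpha}(\TT)$, whose norm involves $w'/w$ and hence $1/\inf w$; the resulting constants therefore depend on $\inf w$, which is \emph{not} among the data (``Lipschitz constants and bounds for the derivatives of $|F|$'') that the lemma allows. This is exactly the regime of the paper's application, where $|F_\epsilon|=|f|+\epsilon$ has empty zero set but $\inf|F_\epsilon|=\epsilon\to0$, and the whole point is a bound on $\cD(F_\epsilon)$ uniform in $\epsilon$. A correct argument cannot split on $E=\emptyset$ versus $E\ne\emptyset$; it must treat points where $w$ is merely small on the same footing as genuine zeros, and it must actually carry out --- not cite --- the singular-integral estimate.
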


\begin{proof}[Proof of Theorem~\ref{th2}]
Let  $p_\epsilon$ be the outer function such that
$$
|p_\epsilon(\zeta)|
=\frac{e^{-M_\epsilon}}{|f(\zeta)|+\epsilon}
\quad \text{a.e. on~}\TT,
$$
where the constant $M_\epsilon$ is chosen so that 
$p_\epsilon(0)=1$. Thus
$$
M_\epsilon 
=\int_{\TT}\log\Bigl(\frac{1}{|f(\zeta)|+\epsilon}\Bigr)\,\frac{|d\zeta|}{2\pi},
$$
and since $\log|f|\notin L^1(\TT)$,
it follows that $M_\epsilon\to\infty$ as $\epsilon\to0^+$.
We are going to prove that 
$$
\lim_{\epsilon\to 0^+}\|p_\epsilon f^2\|_{\cD(\TT)}=0.
$$
If this holds, then by Lemma~\ref{lemmul} we have 
$[f^2]_\NN=[f^2]_{\ZZ}$,
and since clearly $\cZ(f^2)=\cZ(f)$, 
we can apply the  Richter--Ross--Sundberg theorem to obtain the desired result. 
 
We have
$$ 
\|p_\epsilon f^2\|_{\cD(\TT)}^{2}
=\|p_\epsilon f^2\|_{L^2(\TT)}^2+\cD(p_\epsilon f^2).
$$
For the first term, we have
$$
\|p_\epsilon f^2\|_{L^2(\TT)}^2
= \int_\TT\frac{e^{-2M_\epsilon}|f|^{4}}
{(|f|+\epsilon)^2}\frac{|d\zeta|}{2\pi}
\leq e^{-2M_\epsilon}\|f^{2}\|_{L^2(\TT)}^{2}\to0
\quad\text{as~}\epsilon\to0^+.
$$
The second term we estimate using Douglas' formula,
namely
$$
\cD(p_\epsilon f^2)= 
\frac{1}{4\pi^2}\iint_{\TT^2}\frac{|(p_\epsilon f^2)(\zeta)-(p_\epsilon f^2)(\zeta')|^{2}}{|\zeta-\zeta'|^2}\,|d\zeta'|\,|d\zeta|.
$$
Let 
$$
\Gamma:=
\{(\zeta,\zeta')\in\TT^2: |f(\zeta')|\leq |f(\zeta)|\}.
$$ 
Then, by symmetry,
$$
\cD(p_\epsilon f)
= 2\frac{1}{4\pi^2}\iint_{\Gamma}\frac{|(p_\epsilon f)(\zeta)-(p_\epsilon f)(\zeta')|^{2}}{|\zeta-\zeta'|^2}\,|d\zeta'|\,|d\zeta|.
$$
Now, for all $\zeta,\zeta'\in\TT$, we have
\begin{align*}
&|(p_\epsilon f^2)(\zeta)-(p_\epsilon f^2)(\zeta')|^{2}\\
&\quad=|p_\epsilon(\zeta)(f^2(\zeta)-f^2(\zeta'))+f^2(\zeta')(p_\epsilon(\zeta)-p_\epsilon(\zeta'))|^2\\
&\quad\leq 
2|p_\epsilon(\zeta)|^2|f^2(\zeta)-f^2(\zeta')|^2+2|f^2(\zeta')|^2|p_\epsilon(\zeta)-p_\epsilon(\zeta')|^2.
\end{align*}
Hence
$$
\iint_{\Gamma}\frac{|(p_\epsilon f^2)(\zeta)-(p_\epsilon f^2)(\zeta')|^{2}}{|\zeta-\zeta'|^2}\,|d\zeta'|\,|d\zeta|\leq 2A_\epsilon+2B_\epsilon,$$
where 
$$
A_\epsilon:=
\iint_{\Gamma}|p_\epsilon(\zeta)|^2\frac{|f^2(\zeta)-f^2(\zeta')|^2}{|\zeta-\zeta'|^2}\,|d\zeta'|\,|d\zeta|
$$
and  
$$
B_\epsilon:=
\iint_{\Gamma}|f^2(\zeta')|^2\frac{|p_\epsilon(\zeta)-p_\epsilon(\zeta')|^{2}}{|\zeta-\zeta'|^2}\,|d\zeta'|\,|d\zeta|.
$$

We estimate $A_\epsilon$ directly as follows:
\begin{align*}
A_\epsilon
&=e^{-2M_\epsilon}\iint_{\Gamma}\frac{|f(\zeta)+f(\zeta')|^{2}}{(|f(\zeta)|+\epsilon)^{2}}\frac{|f(\zeta)-f(\zeta')|^{2}}{|\zeta-\zeta'|^2}\,|d\zeta'|\,|d\zeta|\\
&\leq 4e^{-2M_\epsilon}  \iint_{\TT^2}\frac{|f(\zeta)-f(\zeta')|^{2}}{|\zeta-\zeta'|^2}\,|d\zeta'|\,|d\zeta|\\
&\leq 4e^{-2M_\epsilon}4\pi^2\cD(f).
\end{align*}
Hence $A_\epsilon\to0$ as $\epsilon\to0^+$.

To estimate $B_\epsilon$, 
we consider the outer function $F_\epsilon$ such that
$$
|F_\epsilon(\zeta)|=|f(\zeta)|+\epsilon\quad \text{a.e. on~} \TT.
$$
By Lemma~\ref{tam},
since  $|F_\epsilon|\in\cC^{1+\alpha}(\TT)$, 
 we have $F_\epsilon\in\Lip_{(1+\alpha)/2}(\TT)\subset\cD(\TT)$  
and there exists a positive constant $D$, depending only on $|f|$, 
such that $\cD(F_\epsilon)\leq D$ for all $\epsilon\in(0,1)$. We then have 
\begin{align*}
B_\epsilon
&=\iint_{\Gamma}\frac{e^{-2M_\epsilon}|f^2(\zeta')|^2}{(|f(\zeta)|+\epsilon)^2(|f(\zeta')|+\epsilon)^2}\frac{|1/p_\epsilon(\zeta)-1/p_\epsilon(\zeta')|^2}{|\zeta-\zeta'|^2}\,|d\zeta'|\,|d\zeta|\\
&\leq e^{-2M_\epsilon}\iint_{\Gamma}
\frac{|F_\epsilon(\zeta)-F_\epsilon(\zeta')|^2}{|\zeta-\zeta'|^2}\,|d\zeta'|\,|d\zeta|\\
& \leq e^{-2M_\epsilon} 4\pi^2\cD(F_\epsilon).
\end{align*}
Thus $B_\epsilon\to0$ as $\epsilon\to0^+$.
This completes the proof of Theorem~\ref{th2}.
\end{proof}


\section{Proof of Theorem~\ref{th3}}

To prove Theorem~\ref {th3}, we shall need the following additional lemma.

\begin{lem}\label{lip}
Let $f\in \Lip_\beta(\TT)$, where $\beta>1/2$. 
Then, for $\eta \in (0,\frac{2\beta-1}{2\beta})$, we have
$$
\iint_{\TT^2}\frac{|f(\zeta)-f(\zeta')|^{2-2\eta}}{|\zeta-\zeta'|^2}\,|d\zeta'|\,|d\zeta|<+\infty.
$$
\end{lem}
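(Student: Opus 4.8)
The plan is to estimate the integrand pointwise using the Hölder continuity of $f$, and then to reduce the double integral to a standard integral of a power of $|\zeta-\zeta'|$.

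First I would use the definition of $\Lip_\beta(\TT)$: there is a constant $C=\|f\|_{\Lip_\beta(\TT)}$ with $|f(\zeta)-f(\zeta')|\le C|\zeta-\zeta'|^\beta$ for all $\zeta,\zeta'\in\TT$. Since $\eta\in(0,\frac{2\beta-1}{2\beta})$ and $\frac{2\beta-1}{2\beta}<1$, we have $2-2\eta>0$, so raising the inequality to the power $2-2\eta$ preserves it and gives
$$
|f(\zeta)-f(\zeta')|^{2-2\eta}\le C^{2-2\eta}|\zeta-\zeta'|^{2\beta(1-\eta)}.
$$
Hence the integrand is bounded by $C^{2-2\eta}|\zeta-\zeta'|^{2\beta(1-\eta)-2}$.

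Next I would bound the double integral by
$$
C^{2-2\eta}\int_\TT\Bigl(\int_\TT|\zeta-\zeta'|^{2\beta(1-\eta)-2}\,|d\zeta'|\Bigr)|d\zeta|.
$$
Writing $\zeta'=\zeta e^{it}$ with $t\in(-\pi,\pi]$ and using $|\zeta-\zeta'|=2|\sin(t/2)|$, which is comparable to $|t|$ on this range, the inner integral is comparable to $\int_{-\pi}^{\pi}|t|^{2\beta(1-\eta)-2}\,dt$. This is finite precisely when $2\beta(1-\eta)-2>-1$, that is, when $\eta<1-\frac{1}{2\beta}=\frac{2\beta-1}{2\beta}$, which is exactly the hypothesis. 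The inner integral is then finite and independent of $\zeta$, so integrating over $\zeta$ contributes only the factor $2\pi$, and the lemma follows.

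There is no genuine obstacle: the argument is a one-line pointwise estimate followed by the elementary fact that $\int_\TT|\zeta-\zeta'|^{\gamma}\,|d\zeta'|<\infty$ iff $\gamma>-1$. The only point requiring care is the exponent arithmetic, namely checking that the borderline value of $\eta$ coming from the convergence condition coincides with $\frac{2\beta-1}{2\beta}$ — which is why the hypothesis is phrased that way. One may also note in passing that the assumption $\beta>1/2$ is precisely what makes the admissible range of $\eta$ nonempty, since $\frac{2\beta-1}{2\beta}>0$ if and only if $\beta>1/2$.
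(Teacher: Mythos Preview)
Your proof is correct and is essentially the same as the paper's: both use the pointwise bound $|f(\zeta)-f(\zeta')|^{2-2\eta}\lesssim|\zeta-\zeta'|^{2\beta(1-\eta)}$ from the $\Lip_\beta$ hypothesis and then observe that $\iint_{\TT^2}|\zeta-\zeta'|^{2\beta(1-\eta)-2}\,|d\zeta'|\,|d\zeta|<\infty$ precisely when $\eta<\frac{2\beta-1}{2\beta}$. The paper simply records this in one displayed inequality, whereas you spell out the parametrization and the convergence check.
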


\begin{proof} 
Since  $\beta>1/2(1-\eta)$, we get
$$
\iint_{\TT^2}
\frac{|f(\zeta)-f(\zeta')|^{2-2\eta}}{|\zeta-\zeta'|^2}\,|d\zeta'|\,|d\zeta|
\lesssim 
\iint_{\TT^2}\frac{|d\zeta'|\,|d\zeta|}{|\zeta-\zeta'|^{2(1-(1-\eta)\beta)}}<\infty.\qedhere
$$
\end{proof}

Note that here, and in what follows, we write
$A\lesssim B$  to mean that there is an absolute constant $C$ such that $A \leq CB$.

\begin{proof}[Proof of Theorem~\ref{th3}]
By \cite {RRS}, it suffices to prove that 
$[f]_{\NN}$ is $2$-invariant, 
which is equivalent to proving that 
$f\in [Sf]_{\NN}$. 

Let $\epsilon,\gamma >0$, 
where $\gamma$ will be taken small. 
Let $E$ be a closed subset of $\cZ(f)$ such that $|E|=0$ and $E\notin(C)$. 
Let $p_\epsilon$ be the outer function satisfying  
$$
|p_\epsilon(\zeta)|
=\frac{e^{-M_\epsilon}}{(d(\zeta,E)^\gamma+\epsilon)^{1/2}}
\quad  \text{a.e. on~}\TT,
$$
where the constant $M_\epsilon$ is chosen so that 
$p_\epsilon(0)=1$. Thus
$$
M_\epsilon:=
\frac{1}{2}\int_\TT \log \Bigl(\frac{1}{d(\zeta,E)^\gamma+\epsilon }\Bigr)\,\frac{|d\zeta|}{2\pi},
$$
and since $E\notin(C)$, it follows that $M_\epsilon\to\infty$ as $\epsilon\to0^+$.
By Lemma~\ref{lemmul}, it suffices to prove that 
$$
\lim _{\epsilon \to 0^+}\|p _\epsilon f\|_{\cD(\TT)}=0.
$$

Now
$$ 
\|p_\epsilon f\|_{\cD(\TT)}^{2}
=\|p_\epsilon f\|_{L^2(\TT)}^2+\cD(p_\epsilon f).
$$
 For the first term, we have
$$
\|p_\epsilon f\|_{L^2(\TT)}^{2}\lesssim e^{-2M_\epsilon} 
\int_\TT\frac{d(\zeta,E)^{2\beta}}{d(\zeta,E)^\gamma}\,|d\zeta|.
$$
Thus $\|p_\epsilon f\|_{L^2(\TT)}\to0$ as $\epsilon \to 0^+$, 
provided that $\gamma <2\beta$.
 
For the second term we again use Douglas' formula,
namely
$$
\cD(p_\epsilon f)
=\frac{1}{4\pi^2}\iint_{\TT^2}\frac{|(p_\epsilon f)(\zeta)-(p_\epsilon f)(\zeta')|^{2}}{|\zeta-\zeta'|^2}\,|d\zeta'|\,|d\zeta|.
$$
Let 
$$
\Gamma:=\{(\zeta,\zeta ')\in\TT^2:  
d(\zeta ',E)\leq  d(\zeta ,E) \}.
$$
Arguing as in the proof of Theorem~\ref{th2}, we have
\begin{align*}
\cD (p_\epsilon f)
& \lesssim \iint _\Gamma |p_\epsilon(\zeta) |^2\frac{|f(\zeta)-f(\zeta ')|^2}{|\zeta -\zeta '|^2}|d\zeta||d\zeta '|\\
& \quad+\displaystyle  \iint _\Gamma |f (\zeta ') |^2\frac{|p(\zeta)-p(\zeta ')|^2}{|\zeta -\zeta '|^2}|d\zeta||d\zeta '|\\
&= A_\epsilon +B_\epsilon,~\text{say}.
\end{align*}

To estimate $A_\epsilon$,
let  $\eta \in (0,(2\beta-1)/(2\beta))$ and let  
$\gamma<2\beta\eta$. Then 
\begin{align*}
A_\epsilon 
&=  e^{-2M_\epsilon} \iint _\Gamma \Bigl(\frac{|f(\zeta)-f(\zeta ')|^{2\eta}}{d(\zeta,E)^\gamma+\epsilon}\Bigr)\Bigl(\frac{|f(\zeta)-f(\zeta ')|^{2-2\eta}}{|\zeta -\zeta '|^2}\Bigr)\,|d\zeta|\,|d\zeta '|\\
&\leq  2Ce^{-2M_\epsilon}\iint _{\TT^2} 
\frac{d(\zeta, E)^{2\beta \eta}}{d(\zeta,E)^\gamma}\Bigl(\frac{|f(\zeta)-f(\zeta ')|^{2-2\eta}}{|\zeta -\zeta '|^2}\Bigr)\,|d\zeta|\,|d\zeta'|\\
&\leq 2C^{-2M_\epsilon}\iint_{\TT^2}\frac{|f(\zeta)-f(\zeta ')|^{2-2\eta}}{|\zeta -\zeta'|^2}\,|d\zeta|\,|d\zeta '|, 
\end{align*}
where $C$ depends only on $f$, and the double integral is finite, thanks to Lemma~\ref{lip}.
Hence $A_\epsilon\to0$ as $\epsilon \to 0^+$.  

To estimate $B_\epsilon$, we introduce the outer function $F_\epsilon$ satisfying 
$$
|F_\epsilon(\zeta)|= d(\zeta,E)^\gamma+\epsilon 
\quad \text{a.e. on~}\TT.
$$
By the Carleson--Richter--Sundberg formula 
\cite[Theorem 7.4.2]{EKMR}, we have
$$
\cD_\zeta(F_\epsilon)=
\int_\TT \frac{|F_\epsilon(\zeta)|^2-|F_\epsilon(\zeta')|^2-2|F_\epsilon(\zeta')|\log |F_\epsilon(\zeta)/F_\epsilon(\zeta')|}{|\zeta-\zeta'|^2}\,\frac{|d\zeta'|}{2\pi}.
$$
Therefore
\begin{align*}
B_\epsilon 
&\lesssim e^{-2M_\epsilon} \iint_\Gamma \frac{d(\zeta',E) ^{2\beta}}{(d(\zeta,E)^\gamma+\epsilon)(d(\zeta ',E)^\gamma
+ \epsilon)}\frac{|F_\epsilon(\zeta)-F_\epsilon(\zeta')|^2}{|\zeta-\zeta'|^2}\,|d\zeta|\,|d\zeta '|\\
& \lesssim  e^{-2M_\epsilon}\iint_{\TT^2} \frac{|F_\epsilon(\zeta)-F_\epsilon(\zeta')|^2}{|\zeta -\zeta'|^2}d(\zeta',E)^{2(\beta-\gamma)}\,|d\zeta|\,|d\zeta'|\\
& \lesssim e^{-2M_\epsilon}\int_\TT \cD_{\zeta}(F_\epsilon) d(\zeta,E)^{2(\beta-\gamma)}\,|d\zeta |\\
&\lesssim  e^{-2M_\epsilon} \iint_{\TT^2}\Bigl(\frac{|F_\epsilon(\zeta)|^2-|F_\epsilon(\zeta')|^2-2|F_\epsilon(\zeta')|\log |F_\epsilon(\zeta)/F_\epsilon(\zeta')|}{|\zeta-\zeta'|^2}\Bigr) \\
& \qquad\qquad\qquad\times 
\Bigl(d(\zeta,E)^{2(\beta-\gamma)} +d(\zeta',E)^{2(\beta-\gamma)}\Bigr)\,|d\zeta|\,|d\zeta'|.
\end{align*}
Exchanging the roles of $\zeta$ and $\zeta'$, and taking the average, we obtain 
\begin{align*}
 B_\epsilon 
 &\lesssim  e^{-2M_\epsilon} \iint_{\TT^2}\Bigl(\frac{(|F_\epsilon(\zeta)|^2-|F_\epsilon(\zeta')|^2)\log |F_\epsilon(\zeta)/F_\epsilon(\zeta')|}{|\zeta-\zeta'|^2}\Bigr) \\
& \qquad\qquad\qquad\times 
\Bigl(d(\zeta,E)^{2(\beta-\gamma)} +d(\zeta',E)^{2(\beta-\gamma)}\Bigr)\,|d\zeta|\,|d\zeta'|.
\end{align*}
Thus
\begin{equation}\label{eqB1}
B_\epsilon\lesssim 
e^{-2M_\epsilon}\iint_{\TT^2} 
\frac{\delta^\gamma-\delta'^\gamma}{|\zeta-\zeta'|^2}\log \Bigl(\frac{\delta^\gamma+\epsilon}{\delta'^\gamma+\epsilon}\Bigr)(\delta^{2(\beta-\gamma)}+\delta'^{2(\beta-\gamma)})\,|d\zeta|\,|d\zeta'|,
\end{equation}
where $\delta:=d(\zeta,E)$ and $\delta':=d(\zeta',E)$.

Let $(I_j)$ be the connected components of $\TT\setminus E$, and set 
$$
N_E(t):=2\sum_{j}1_{\{|I_j|>2t\}},
\qquad 0<t<1.
$$
Then, for every measurable function 
$\Omega:[0,\pi]\to\RR^+$, we have 
$$
\int_\TT\Omega(d(\zeta,E))\,|d\zeta|
=\int_0^\pi \Omega(t)N_E(t)\,dt.
$$
Using  similar ideas to those in \cite{EKR0,EKR}, 
we obtain
\begin{align*}
J 
&:= \iint_{\TT^2}\frac{\delta^\gamma-\delta'^\gamma}{|\zeta-\zeta'|^2}\log\Bigl(\frac{ \delta^\gamma+\epsilon}{\delta'^\gamma+\epsilon}\Bigr)(\delta^{2(\beta-\gamma)}+\delta'^{2(\beta-\gamma)})\,|d\zeta|\,|d\zeta'|\\
&\lesssim \int_0^\pi\int_0^\pi \frac{((s+t)^\gamma-t^\gamma)}{s^2}\log \Bigl(\frac{(s+t)^\gamma+\epsilon }{t^\gamma+\epsilon}\Bigr) (t+s)^{2(\beta-\gamma)}N_E(t)\,ds\,dt\\
& \lesssim \int_0^\pi\int_0^t \frac{((s+t)^\gamma-t^\gamma)\log[(s+t)^\gamma /t^\gamma]}{s^2} (t+s)^{2(\beta-\gamma)}N_E(t)\,ds\,dt\\
& \quad + \int_0^\pi \int_{t}^{\pi} \frac{((s+t)^\gamma-t^\gamma)}{s^2}\log[1/(t^\gamma+\epsilon)](t+s)^{2(\beta-\gamma)}\,ds \,N_E(t)dt\\
& =  J_1+J_2, \\
\intertext{where}
J_1 &\lesssim  \int_0^\pi t^{2\beta-\gamma-1} \int_{0}^{1}  \frac{((1+x)^\gamma-1)}{x^2} \log (1+x)\,dx \,N_E(t)\,dt\\
& \lesssim \int_0^\pi t^{2\beta-\gamma-1} N_E(t)\,dt= O(1),\\
\intertext{and}
J_2 & \lesssim \int_0^\pi t^{2\beta-\gamma-1}\log[1/(t^\gamma+\epsilon)]\int_{1}^{\pi/t}  \frac{(1+x)^\gamma-1)}{s^2} (1+x)^{2(\beta-\gamma)}\,dx \,N_E(t)dt\\
& \lesssim \int_0^\pi \log[1/(t^\gamma+\epsilon)]N_E(t)\,dt\\
& \lesssim \int_\TT |\log (d(\zeta,E)^\gamma+\epsilon)|\,|d\zeta|=2 M_\epsilon. 
\end{align*}
Thus $J = O(M_\epsilon)$. Combining this with the estimate \eqref{eqB1},  we get 
$$
B_\epsilon\lesssim M_\epsilon e^{-2M_\epsilon}.
$$
Hence $B_\epsilon\to0$ as $\epsilon\to0^+$.
This completes the proof of Theorem~\ref{th3}.
\end{proof}


\section{Concluding remarks} 

{\bf1.} In order to produce cyclic functions for $\cD(\TT)$ using Theorem~\ref{th3}, 
we need to construct closed subsets 
$E\subset\TT$ such that $E\notin(C)$ and $c(E)=0$. 
An easy example can be given by countable sets. Indeed, taking
$E_\beta:=\{e^{i/(\log n)^\beta}: n\geq 2\}$ 
with $\beta\leq 1$ provides such an example. 
Using Cantor-type sets, 
it is also possible to construct  perfect sets $E$ such that $E\notin(C)$ and $c(E)=0$.

{\bf2.} One can consider weighted harmonic Dirichlet spaces 
instead of the classical harmonic Dirichlet space. 
More precisely, given $\alpha\in[0,1)$,  
the {\it weighted harmonic Dirichlet space} 
$\cD_\alpha(\TT)$ is the space  of functions 
$f\in L^2(\TT)$  such that
$$ 
\|f\|_{\cD_\alpha(\TT)}^{2}
:=\sum_{n\in\ZZ}|\widehat{f}(n)|^2(1+|n|)^{1-\alpha}<\infty.
$$
We define the {\it $\alpha$-capacity} of a Borel subset $E\subset\TT$ by 
$$
c_\alpha(E)=1/\inf\{I_\alpha(\mu): \mu\in \cP(E)\},
$$
where $\cP(E)$ is the set of all probability measures 
supported on a compact subset of $E$ and 
$I_\alpha(\mu):=\sum_{n\geq1}{|\hat{\mu}(n)|^2}/n^{1-\alpha}$
is the {\it $\alpha$-energy} of $\mu$. 
We say that a property holds {\it $c_\alpha$-quasi-everywhere}  
if it holds everywhere outside a set of 
$c_\alpha$-capacity zero. 

It is well known that 
$\Lip_{\beta}(\TT)\subset\cD _\alpha (\TT)$ 
if and only $\beta >(1-\alpha)/2$. 
Theorem~\ref{th3} may be extended to show that, 
if $f\in\Lip_{\beta}(\TT)$, 
where $\beta\in((1-\alpha)/2,1]$, and if $\cZ(f)\notin(C)$,  
then 
$$
[f]_{\NN} 
=\{g\in \cD(\TT): g|_{\cZ(f)}=0~ \text{$c_\alpha$-quasi-everywhere}\}.
$$ 

 {\bf 3.} One can equally well consider the holomorphic Dirichlet space,
 namely $\cD:=\{f\in\cD(\TT):\hat{f}(n)=0~(n<0)\}$.
Here too the problem of characterizing the cyclic functions is still open. 
For more on this topic,
see e.g.\ \cite[Chapter~9]{EKMR}.

\section*{Acknowledgement}
Part of the research for this article was carried out while the authors were visiting the University of Bordeaux to participate at the Conference on Harmonic Analysis, Function Theory, Operator Theory and Applications in honor of Jean Esterle. The authors thank the Institut de Mathématiques de Bordeaux for its support and hospitality.

\end{document}